
\documentclass[final,3p,times]{elsarticle}


\usepackage{algorithm}
\usepackage{algorithmic}
\usepackage{multirow}
\usepackage{xcolor}
\usepackage{hyperref}
\usepackage{url}
\usepackage{booktabs,threeparttable,caption2}
\usepackage{amsthm,amsmath}
\usepackage{float}
\usepackage{mathrsfs}
\usepackage{latexsym}
\usepackage{tabularx}
\newtheorem{thm}{Theorem}

\newtheorem{lem}{Lemma}

\newdefinition{rmk}{Remark}


\newcaptionstyle{left}{
\usecaptionmargin\captionfont
{\flushleft\bfseries\captionlabelfont\captionlabel\par}
\mbox{\onelinecaption{\captiontext}{\captiontext}}
}


\begin{document}

\begin{frontmatter}


\title{Exponential integrators for large-scale stiff matrix Riccati differential equations\tnoteref{label1}}
\tnotetext[label1]{The work was supported by the Natural Science Foundation of Jilin Province of China
(20180101224JC)}
\author{Dongping Li}
\ead{lidp@ccsfu.edu.cn}
%
\cortext[cor1]{Corresponding author.}
\address{$^a$ Department of Mathematics, Jilin University, Changchun 130012, PR China\\
$^b$Department of Mathematics, Changchun Normal University, Changchun 130032, PR China}

\begin{abstract}
Matrix Riccati differential equations arise in many different areas and are particular important within the field of control theory.
In this paper we consider numerical integration for large-scale systems of stiff matrix Riccati differential equations. We show how to apply exponential Rosenbrock-type integrators to get approximate solutions. Two typical exponential integration schemes are considered.
The implementation issues are addressed and some low-rank approximations are exploited based on high quality numerical algebra codes. Numerical comparisons demonstrate that the exponential integrators can obtain high accuracy and efficiency for solving large-scale systems of stiff matrix Riccati differential equations.
\end{abstract}

\begin{keyword}
Matrix Riccati differential equations, Exponential integrators, $\varphi$-functions, Low-rank approximation

\MSC[2010] 65L05 \sep 65F10\sep 65F30
\end{keyword}
\end{frontmatter}
\section{Introduction}

In this paper we are concerned with numerical methods for large-scale systems of stiff matrix Riccati differential equations (MRDEs) of the following form
\begin{equation}\label{1.1}
\left\{
\begin{array}{l}
X'(t)=AX(t)+X(t)D+Q-X(t)GX(t)=:F(X(t)),\\
X(0)=X_0,
\end{array}
\right.
\end{equation}
where $A\in \mathbb{R}^{M\times M},~D\in \mathbb{R}^{N\times N},~Q\in \mathbb{R}^{M\times N},~G\in \mathbb{R}^{N\times M}$ are given matrices and $X(t)\in \mathbb{R}^{M\times N}$ is the unknown matrix-valued function.
MRDEs of this form occur in many important applications such as optimal control, $H_\infty$-control, filtering, boundary value problems for systems of ODEs and many others (see e.g. \cite{Abou,Ascher,Ichikawa,Jacobs}). In most control problems, the coefficient matrices $A$ and $D$ are obtained from the discretization of operators defined on infinite dimensional spaces, and the fast and slow modes exist. This means that the associated MRDEs will be fairly large and stiff.

An important special case of (\ref{1.1}) is the symmetric MRDEs
\begin{equation}\label{1.2}
\left\{
\begin{array}{l}
X'(t)=AX(t)+X(t)A^T+Q-X(t)GX(t),\\
X(0)=X_0,
\end{array}
\right.
\end{equation}
here, $Q=Q^T, G=G^T$ and $X_0=X_0^T.$ It is obvious that the solution of symmetric MRDEs is symmetric as $X(t)^T$ is also a solution. The symmetric MRDEs is possibly the most widely studied equations due to its importance in linear-quadratic optimal control problems. 
Another special mention should be paid to MRDEs (\ref{1.1}) with $G=0,$ yieiding the so-called matrix Sylvester differential equations (MSDEs). For a thorough description of these equations and some qualitative issues, we refer the reader to \cite{Abou,Fital,Jacobs,Juang,Reid1972} and the references appearing therein.

 Many numerical methods have been developed in the past for solving MRDEs. Perhaps the most natural numerical technique is to rewrite (\ref{1.1}) as an $MN$-vector ODEs based on Kronecker product, and then to use a standard numerical integrators such as Runge-Kutta or linear
multi-step solvers \cite{Butcher}. However, these approaches are not suitable for solution of large stiff MRDEs. They are generally computationally expensive and hard to exploit the structure inherited in some large practical problems.
For stiff MRDEs, some matrix-valued versions of implicit time integration
schemes, such as the BDF, Rosenbrock methods have been explored through a direct time discretization of (\ref{1.1}), see e.g. \cite{Benner01,Choi,Dieci}. Recently,
some other unconventional numerical methods have been also developed for MRDEs and related problems, including splitting methods
\cite{Mena,Stillfjord1,Stillfjord2} and projection methods \cite{Jbilou,Hached,Koskela}, etc.

The aim of this paper is to introduce exponential integrators for large-scale stiff problems of the forms \ref{1.1} and \ref{1.2}. In the past two decades exponential integrators have become a popular tool for solving large-scale stiff semi-linear systems of ODEs
\begin{equation}\label{1.4}
\left\{
\begin{array}{l}
y'=Ly+f(y),~y\in R^N,~L\in R^{N\times N},\\
y(0)=y_0.
\end{array}
\right.
\end{equation}
 A general derivation of exponential integrators is based on the variation-of-constants formula
\begin{equation}\label{1.5}
\begin{aligned}
y(t_n+h_n)=e^{h_nL}y(t_n)+\int_{0}^{h_n}e^{(h_n-s)L}f\big(y(t_n+s)\big)ds.
\end{aligned}
\end{equation}
By approximating the nonlinear terms
$f\big(y(t_n+s)\big)$ by an appropriate algebra polynomial, various type of exponential integrators can be exploited. Different approximations result in different types of exponential integrators of either multi-step type or Runge-Kutta type, see e.g. \cite{MH2,MH4,MH3,AK}. A main advantage of exponential integrators with stiff order conditions don't suffer from an order reduction even if the matrix $L$ is a discretization of
a unbounded linear operator. For a full overview of exponential integrators and associated software, we refer the readers to the reviews \cite{Hochbruck2010,BV2005} and references therein.
Although the matrix differential equations (\ref{1.1}) can be reformulated as the form (\ref{1.4}) and solved by an exponential integrator, this approach will generate very large $L$ and not be appropriate.

In the present paper we propose matrix-valued versions exponential integrators for stiff MRDEs (\ref{1.1}). The methods provides an efficient alternative to implicit integrators for computing solutions of MRDEs. For large-scale systems, in many applications it is often observed, both practical and theoretical, the solution has low numerical rank and can be approximated by products of low-rank matrices \cite{Lang,Stillfjord3}. To utilize such structure, we introduce how the low-rank implementation can be applied to exponential integrators. Thus we are able to save computational and memory storage requirements compared to the simple application of exponential integrators.

The remainder of the paper is organized as follows. In Section 2, we give some basic results and properties of MRDEs. In Section 3, the exponential Rosenbroc-type methods are introduced for the application to the MRDEs. Section 4 we show some issues of implementation and the low-rank approximations for both typical exponential integration schemes are exploited. Section 5 is devoted to some numerical examples and comparisons with splitting methods of similar orders. Finally, we draw some conclusions in Section 6.

\section{Preliminaries}
We start with recalling a general result on the solution of the MRDEs.
The following result shows that the MRDEs (\ref{1.1}) can be equivalently written in an integral form (see e.g. \cite{Kucera}).

\begin{thm}\label{th1}The exact solution of the MRDEs (\ref{1.1}) is given by
\begin{equation}\label{2.1}
X(t)=e^{tA}X_0e^{tD}+\int_{0}^{t}e^{(t-\tau)A}Qe^{(t-\tau)D}d\tau-\int_{0}^{t}e^{(t-\tau)A}X(\tau)GX(\tau)e^{(t-\tau)D}d\tau.
\end{equation}
\end{thm}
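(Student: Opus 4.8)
The plan is to verify that the right-hand side of (\ref{2.1}) satisfies both the differential equation and the initial condition in (\ref{1.1}), thereby identifying it with the exact solution $X(t)$ by uniqueness. Equivalently — and this is the route I would actually take — I would treat (\ref{1.1}) as a \emph{linear} inhomogeneous matrix differential equation in which the full nonlinear and constant terms are absorbed into a single time-dependent forcing term, and then apply the matrix variation-of-constants formula. Concretely, define
\begin{equation}\label{plan1}
\Phi(t):=Q-X(t)GX(t),
\end{equation}
and rewrite the MRDE as the linear Sylvester-type equation $X'(t)=AX(t)+X(t)D+\Phi(t)$ with $X(0)=X_0$.

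The first key step is to solve this linear problem by the integrating-factor technique adapted to the two-sided (Sylvester) structure. I would consider the quantity $e^{-tA}X(t)e^{-tD}$ and differentiate it using the product rule together with $\frac{d}{dt}e^{-tA}=-Ae^{-tA}$ and $\frac{d}{dt}e^{-tD}=-e^{-tD}D$. Substituting the equation for $X'(t)$, the $AX$ and $XD$ contributions cancel against the derivatives of the exponential factors, leaving
\begin{equation}\label{plan2}
\frac{d}{dt}\big(e^{-tA}X(t)e^{-tD}\big)=e^{-tA}\Phi(t)e^{-tD}.
\end{equation}
The second step is to integrate (\ref{plan2}) from $0$ to $t$, use $X(0)=X_0$, and then left-multiply by $e^{tA}$ and right-multiply by $e^{tD}$. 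Using the semigroup property $e^{tA}e^{-\tau A}=e^{(t-\tau)A}$ (and likewise for $D$), this yields exactly
\begin{equation}\label{plan3}
X(t)=e^{tA}X_0e^{tD}+\int_0^t e^{(t-\tau)A}\Phi(\tau)e^{(t-\tau)D}\,d\tau,
\end{equation}
and finally substituting the definition (\ref{plan1}) of $\Phi$ and splitting the integral into its $Q$ part and its $XGX$ part reproduces (\ref{2.1}).

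I do not expect a serious obstacle here, since the derivation is essentially the matrix analogue of the scalar variation-of-constants formula; the only point requiring care is the bookkeeping of the \emph{two-sided} multiplication, because $A$ acts on the left and $D$ on the right and these exponential factors must be kept in the correct order and never commuted. One should also note that (\ref{2.1}) is an \emph{implicit} representation rather than a closed-form solution — the unknown $X$ reappears inside the final integral through the term $X(\tau)GX(\tau)$ — so the result is an equivalent integral (fixed-point) reformulation of (\ref{1.1}) and not an evaluation of the solution; the proof establishes this equivalence, which is precisely what is needed to motivate the variation-of-constants viewpoint underlying the exponential integrators developed in the sequel.
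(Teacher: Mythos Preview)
Your proposal is correct and matches the paper's approach: the paper's proof consists of the single sentence ``The proof can be done directly by differentiating both sides,'' which is precisely your first stated plan of verifying that the right-hand side of (\ref{2.1}) satisfies the differential equation and initial condition in (\ref{1.1}). Your integrating-factor derivation via $e^{-tA}X(t)e^{-tD}$ is a slightly more constructive route to the same result, and your observation that (\ref{2.1}) is an implicit integral reformulation rather than a closed-form solution is a useful clarification that the paper leaves unstated.
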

\begin{proof}
The proof can be done directly by differentiating both sides.
\end{proof}
Specifically, over the time interval $[t_n,~t_n+h_n],$ by using a change of variables $t=t_n+sh_n$ in (\ref{2.1}) to give
\begin{equation}\label{2.2}
X(t_{n}+h_n)=e^{h_nA}X(t_n)e^{h_nD}+h_n\int_{0}^{1}e^{(1-s)h_nA}Qe^{(1-s)h_nD}ds-h_n\int_{0}^{1}e^{(1-s)h_nA}X(t_{n}+sh_n)GX(t_n+sh_n)e^{(1-s)h_nD}ds.
\end{equation}
The formula (\ref{2.2}) also holds for time-varying coefficient matrices $Q=Q(t)$ and $G=G(t).$ To make constructing methods for MRDEs easier, we use
\begin{equation}\label{2.3}
\mathcal{S}(X):=AX+XD
\end{equation}
to denote the linear operator from the right-hand side of MRDEs (\ref{1.1}), which is called Sylvester operator.  The operator exponential satisfy the following relation (see \cite{Behr}):
\begin{equation}\label{2.4}
e^{\mathcal{S}}(X)=e^\mathcal{H}e^\mathcal{P}(X)=e^{A}Xe^{D},
\end{equation}
here $\mathcal{H}(X)=AX$ and $\mathcal{P}(X)=XD.$
Then, expression (\ref{2.2}) has the simplified form
\begin{equation}\label{2.5}
X(t_n+h_n)=e^{h_n\mathcal{S}}(X (t_n))+h_n\int_{0}^{1}e^{(1-s)h_n\mathcal{S}}(Q)ds-h_n\int_{0}^{1}e^{(1-s)h_n\mathcal{S}}(X(t_n+sh_n)GX(t_n+sh_n))ds.
\end{equation}
The first term from the right-hand side of (\ref{2.5}) involves operator exponential and represents the homogenous part of the solution, whereas the other two terms consist of integrals, again involving operator exponential. A natural idea to construct exponential integrators is to approximate the integrals on the right-hand side of (\ref{2.5}) by a quadrature formula, in which only the nonlinearity term $XGX$ are approximated but the operator exponential are treated exactly. In particular, for MSDEs, we have the following result.
\begin{lem}\label{lem1} Let $A\in \mathbb{R}^{M\times M},~D\in \mathbb{R}^{N\times N},$ and~$Q(t)\in \mathbb{R}^{M\times N}$ be a sufficiently differential matrix-value function, then the exact solution of the matrix differential equations
\begin{equation}\label{2.6}
X'(t)=AX(t)+X(t)D+Q(t),~~X(t_n)=X_n
\end{equation}
can be represented by the expansion
\begin{equation}\label{2.7}
X(t_n+h_n)=e^{h_n\mathcal{S}}(X(t_n))+\sum_{j=1}^{m}h_n^{j}\varphi_{j}(h_n\mathcal{S})(Q^{(j-1)}(t_n))ds+R_m(h_n),
\end{equation}
where
\begin{equation}\label{2.8}
R_m(h_n)=\frac{h_n^{m+1}}{(m-1)!}\int_{0}^{1}\int_{0}^{1}s^m(1-\theta)^{m-1}e^{(1-s)h_n\mathcal{S}}(Q^{(m)}(t_n+\theta sh_n))d\theta ds,
\end{equation}
\begin{equation}\label{2.9}
\varphi_j(z)=\int_0^1e^{(1-\theta)z}\frac{\theta^{j-1}}{(j-1)!}d\theta,~~~~  j\geq1.
\end{equation}
\end{lem}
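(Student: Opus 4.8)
The plan is to reduce everything to the variation-of-constants formula for the linear inhomogeneous Sylvester problem and then insert a Taylor expansion of the inhomogeneity $Q$. Dropping the quadratic term (the $G=0$ case) from~(\ref{2.5}), together with the time-varying inhomogeneity permitted after~(\ref{2.2}), gives the exact representation
\[
X(t_n+h_n)=e^{h_n\mathcal{S}}(X(t_n))+h_n\int_{0}^{1}e^{(1-s)h_n\mathcal{S}}\big(Q(t_n+sh_n)\big)\,ds,
\]
so the entire statement amounts to rewriting this single integral term. I would first record that, since $\mathcal{S}(X)=AX+XD$ is linear, relation~(\ref{2.4}) shows $e^{(1-s)h_n\mathcal{S}}(\cdot)=e^{(1-s)h_nA}(\cdot)\,e^{(1-s)h_nD}$ is, for each fixed $s$, a bounded linear map on $\mathbb{R}^{M\times N}$; this is what permits moving the operator through finite sums and through the inner integral appearing in the remainder.

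Next I would expand $Q(t_n+sh_n)$ about $t_n$ by Taylor's theorem with the integral form of the remainder,
\[
Q(t_n+sh_n)=\sum_{k=0}^{m-1}\frac{(sh_n)^{k}}{k!}Q^{(k)}(t_n)+\frac{(sh_n)^{m}}{(m-1)!}\int_{0}^{1}(1-\theta)^{m-1}Q^{(m)}(t_n+\theta sh_n)\,d\theta,
\]
where the change of variable $\tau=t_n+\theta sh_n$ turns the standard remainder $\frac{1}{(m-1)!}\int_{t_n}^{t_n+sh_n}(t_n+sh_n-\tau)^{m-1}Q^{(m)}(\tau)\,d\tau$ into the displayed double integral. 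Substituting this into the variation-of-constants formula and distributing the linear operator $e^{(1-s)h_n\mathcal{S}}$ across the sum splits the integral term into a polynomial part and a remainder part. The remainder part, after pulling the factor $(sh_n)^m$ out and collecting the powers of $h_n$ and $s$, is exactly $R_m(h_n)$ as written in~(\ref{2.8}).

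It then remains to match the polynomial part with the $\varphi$-function sum. The term indexed by $k$ is $\frac{h_n^{k+1}}{k!}\int_0^1 s^k\,e^{(1-s)h_n\mathcal{S}}\big(Q^{(k)}(t_n)\big)\,ds$, and comparison with the definition~(\ref{2.9}) gives $\int_0^1 \frac{s^k}{k!}\,e^{(1-s)z}\,ds=\varphi_{k+1}(z)$, so this term equals $h_n^{k+1}\varphi_{k+1}(h_n\mathcal{S})\big(Q^{(k)}(t_n)\big)$. Re-indexing by $j=k+1$ converts $\sum_{k=0}^{m-1}$ into the asserted $\sum_{j=1}^{m}h_n^{j}\varphi_j(h_n\mathcal{S})\big(Q^{(j-1)}(t_n)\big)$, which closes the argument. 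I do not anticipate a genuine obstacle: once the variation-of-constants formula is in hand the computation is routine. The only points needing care are casting the integral remainder into the precise double-integral shape of~(\ref{2.8}) via the substitution $\tau=t_n+\theta sh_n$, and reading off the $\varphi_j$ correctly---in particular keeping the index shift straight so that $\varphi_j$ is paired with $Q^{(j-1)}$.
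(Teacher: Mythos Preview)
Your proposal is correct and follows essentially the same route as the paper: both start from the variation-of-constants formula $X(t_n+h_n)=e^{h_n\mathcal{S}}(X(t_n))+h_n\int_0^1 e^{(1-s)h_n\mathcal{S}}(Q(t_n+sh_n))\,ds$, insert the Taylor expansion of $Q(t_n+sh_n)$ with integral remainder, and then read off the $\varphi_j$ from the definition~(\ref{2.9}). Your write-up simply makes explicit the re-indexing $j=k+1$ and the substitution producing the double integral in $R_m(h_n)$, which the paper leaves implicit.
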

\begin{proof}
By formula (\ref{2.4}), the solution of equations (\ref{2.6}) can be written
\begin{equation}\label{2.10}
X(t_n+h_n)=e^{h_n\mathcal{S}}X_n+h_n\int_{0}^{1}e^{(1-s)h_n\mathcal{S}}(Q(t_n+s h_n))ds.
\end{equation}
Inserting the Taylor series expansion of $Q(t_n+sh_n)$
\begin{equation}\label{2.11a}
Q(t_n+sh_n)=\sum\limits_{j=1}^m\frac{(sh_n)^{j-1}}{(j-1)!}Q^{(j-1)}(t_n)+\int_{0}^{1}\frac{(sh_n)^{m}}{(m-1)!}(1-\theta)^{m-1}Q^{(m)}(t_n+\theta sh_n))d\theta
\end{equation}
into the formula (\ref{2.10}) and applying the definition (\ref{2.9}) we arrive at the required result.
\end{proof}

The functions $\varphi_j(z)$ defined in (\ref{2.9}) satisfy the following recurrence relations
 \begin{equation}\label{2.11b}
\varphi_{j+1}(z)=\frac{\varphi_{j}(z)-\frac{1}{j!}}{z}, ~~~~\varphi_{0}(z)=e^z.
\end{equation}
A special case of the nonhomogeneous term in equations (\ref{2.6}) is an matrix polynomials, i.e., $Q(t)=\sum\limits_{j=0}^{m-1}\frac{t^{j-1}}{(j-1)!}N_j,$ $N_j\in \mathbb{R}^{M\times N},~~j=0,1,\cdots,m-1,$ in this case, the exact solution of equations (\ref{2.6}) can be represented by the expansion
\begin{equation}\label{2.13}
X(t)=e^{t\mathcal{S}}N_0+\sum\limits_{j=1}^{m}t^{j}\varphi_{j}(t\mathcal{S})N_j.
\end{equation}

\section{Exponential Rosenbrock-type integrators for MRDEs}

In this section, we consider the time discretization of MRDEs (\ref{1.1}). Rewrite equations (\ref{1.1}) as
\begin{equation}\label{3.1}
X'(t)=\mathcal{S}_n(X)+\mathcal{G}_n(X),
\end{equation}
where $\mathcal{S}_n$ denotes the Fr\'{e}chet derivative of $F$ and $\mathcal{G}_n$ the nonlinear remainder at $X_n,$ respectively:
\begin{eqnarray}\label{3.3}
\mathcal{S}_n(X)=A_nX+XD_n,~~\mathcal{G}_n(X)=F(X)-\mathcal{S}_n(X)
\end{eqnarray}
with $A_n=(A-X_nG)$ and $D_n=(D-GX_n).$

It is obvious that $\mathcal{S}_n$ is a Sylvester operator. Formally, by the variation of constants formula (\ref{2.5}), the exact solution of (\ref{3.1}) can be written as follows:
\begin{equation}\label{3.4}
X(t_{n}+h_n)=e^{h_n\mathcal{S}_n}(X(t_n))+h_n\int_{0}^{1}e^{(1-s)h_n\mathcal{S}_n}(\mathcal{G}_n(X(t_n+sh_n)))ds.
\end{equation}
The above expression has a similar structure with (\ref{1.5}) but Sylvester operator exponential instead of matrix exponential.
Thus we can apply various existing exponential integrators to (\ref{3.1}). The application of the general exponential Runge-Kutta type methods \cite{MH2}, to the MRDEs (\ref{3.1}) yields
\begin{equation}\label{3.5}
\left\{
\begin{array}{l}
X_{ni}=e^{c_ih_n\mathcal{S}_n}(X_n)+h_n\sum\limits^{i-1}_{j=1}a_{ij}(h_n\mathcal{S}_n)(\mathcal{G}_n(X_{nj})),~~1\leq i\leq s,\\
X_{n+1}=e^{h_n\mathcal{S}_n}(X_n)+h_n\sum\limits^{s}_{i=1}b_{i}(h_n\mathcal{S}_n)(\mathcal{G}_n(X_{ni})).
\end{array}
\right.
\end{equation}
Here, $c_i$ is the nodes, and the coefficients $a_{ij}(z),$ $b_i(z)$ are linear combinations of the $\varphi_j(c_iz),$ $\varphi_j(z)$ respectively.
These coefficients can be determined by a stiff error analysis which can be adapted from the stiff order theory presented in \cite{Hochbruck2010,Luan2013}. The process
is highly sophisticated and is omitted here. In our context, we only consider two specific exponential integration schemes. They will be used in our numerical experiments in Section 5. The first and simplest exponential integration scheme is the exponential Rosenbrock-type Euler scheme
\begin{eqnarray}
\begin{array}{ll}
X_{n+1}&=e^{h_n\mathcal{S}_n}(X_n)+h_n\varphi_1(h_n\mathcal{S}_n)(\mathcal{G}_n(X_n))\\
&=X_n+h_n\varphi_1(h_n\mathcal{S}_n)(F(X_n)).
\end{array}\label{3.6}
\end{eqnarray}
The scheme is computationally attractive since it is second order with only one $\varphi$-function. The second scheme is order three (denoted Erow3), which
can be regarded as a modification of exponential Rosenbrock-type Euler scheme
\begin{equation}
\begin{array}{lll}
X_{n,2}&=e^{h_n\mathcal{S}_n}(X_n)+h_n\varphi_1(h_n\mathcal{S}_n)(\mathcal{G}_n(X_n))\\
&=X_n+h_n\varphi_1(h_n\mathcal{S}_n)(F(X_n)),\\
X_{n+1}&=e^{h_n\mathcal{S}_n}(X_n)+h_n(\varphi_1(h_n\mathcal{S}_n)-2\varphi_3(h_n\mathcal{S}_n))(\mathcal{G}_n(X_{n}))+2h_n\varphi_3(h_n\mathcal{S}_n)(\mathcal{G}_n(X_{n,2}))\\
&=X_n+h_n\varphi_1(h_n\mathcal{S}_n)(F(X_n))+2h_n\varphi_3(h_n\mathcal{S}_n)(\mathcal{G}_n(X_{n,2})-\mathcal{G}_n(X_{n})).
\end{array}\label{3.7}
\end{equation}
The internal stage has the same
structure as the exponential Rosenbrock-type Euler scheme (\ref{3.6}), and the external stage is a perturbation of the internal stage. The above two schemes are usually embedded to create an adaptive time stepping method.

\section{Implementation issues }
For exponential integrators, the main computational cost is to approximate the exponential and exponential-type functions at each time-step. To our knowledge, there is no explicit method to evaluate the functions of a Sylvester operator in the literatures. For the computation of the first $\varphi$-function, the following formula gives an indirect way.

Define the augmented matrix $\mathbb{A}_n$ by
\begin{equation}
\begin{aligned}\label{4.5}
\mathbb{A}_n=\left(\begin{tabular}{cccccc}
$A_n$ & $\mathcal{G}_n$\\
$0$ & $-D_n$
\end{tabular}%
\right)\in \mathbb{C}^{(M+N)\times (M+N)}.
\end{aligned}
\end{equation}
Using the formula (10.40) arising in (\cite{Higham 2008}, we have
\begin{equation}
\begin{aligned}\label{4.7}
e^{\mathbb{A}_n}=\left(\begin{tabular}{cccccc}
$e^{A_n}$ & $\int_{0}^{1}e^{(1-s)A_n}\mathcal{G}_ne^{-sD_n}ds$\\
$0$ & $e^{-D_n}$
\end{tabular}%
\right)\in \mathbb{C}^{(M+N)\times (M+N)}.
\end{aligned}
\end{equation}
Then the scheme (\ref{3.6}) can be rewritten as
\begin{equation}\label{4.8}
\begin{aligned}
X_{n+1}=\left((I_M,0)e^{h_n\mathbb{A}_n}\left(\begin{tabular}{cccccc}
$X_n$\\
$I_M$
\end{tabular}%
\right)\right)e^{h_nD_n}.
\end{aligned}
\end{equation}

For the computation of a single matrix exponential or its action on a thin matrix, a number of methods have been proposed in the literatures for carrying out this task, see e.g. \cite{Al-Mohy 2009,Al-Mohy 2011,Sidje 1998} and the review \cite{Moler 2003}. This approach has the major advantage of simplicity but is likely to be too expensive for large $M$ and $N.$

A more general strategy for approximating $\varphi$-functions is to apply a numerical quadrature scheme. For a given function $\varphi_k$, the Sylvester operator $\mathcal{S}_n$ and an matrix $N_k,$ we approximate $\varphi_k(h_n\mathcal{S}_n)(N_k)$ by a quadrature approximation with quadrature nodes $s_j$ and weights $\omega_j:$
\begin{eqnarray}\label{4.10}
\varphi_k(h_n\mathcal{S}_n)N_k\approx \frac{1}{(k-1)!}\sum\limits^{p}_{j=0}\omega_j s_j^{k-1}e^{(1-s_j)h_n\mathcal{S}_n}(N_k).
\end{eqnarray}
Thus to evaluate $\varphi_k(h_n\mathcal{S}_n)N_k$ we need to compute $p+1$ operator exponential acting on the same matrix.
In practical application if the matrix $N_k$ has a low-rank factorization $N_k=L_kD_kU_k^T$ where both $L_k$ and $U_k$ are full column rank
and $D_k$ is nonsingular, the block Krylov subspace method can be applied to the computation of operator exponential involved.

In fact, as shown in the schemes (\ref{3.6}) and (\ref{3.7}), every stage in an exponential integrator can be expressed as a linear combination
of the form
\begin{equation}\label{4.1}
\varphi_0(\hat{\mathcal{S}}_n)N_0+\varphi_1(\hat{\mathcal{S}}_n)N_1+\varphi_2(\hat{\mathcal{S}}_n)N_2+\cdots+\varphi_k(\hat{\mathcal{S}}_n)N_k,
\end{equation}
here $\hat{\mathcal{S}}_n=h_n\mathcal{S}_n,$ $N_i\in R^{M\times N}, i=0,1,\cdots,k.$
Using the recurrence relation (\ref{2.11b}) we can calculate (\ref{4.1}) recursively. Two alternatives are available. The first one is a forward recursion, i.e.,
\begin{equation}\label{4.2}
\left\{
\begin{array}{l}
W_0=N_0,\\
W_j=\hat{\mathcal{S}}_n(W_{j-1})+N_j,~~j=1,\cdots,k,
\end{array}
\right.
\end{equation}
then
\begin{equation}\label{4.3}
\sum\limits^{k}_{j=0}\varphi_j(\hat{\mathcal{S}}_n)N_j=\varphi_k(\hat{\mathcal{S}}_n)(W_k)+\sum\limits^{k-1}_{j=0}\frac{1}{j!}W_j.
\end{equation}
The main computational cost of this process includes the action of $k$ Sylvester operator and a $\varphi_k(\hat{\mathcal{S}}_n)(W_k)$.
Another approach is a backward recursion
\begin{equation}\label{4.4}
\left\{
\begin{array}{l}
W_k=\hat{\mathcal{S}}_n^{-1}N_k,\\
W_{j}=\hat{\mathcal{S}}_n^{-1}(N_{j}+W_{j+1}),~~j=k-1,\cdots,1.
\end{array}
\right.
\end{equation}

\begin{equation}\label{4.5a}
\sum\limits^{k}_{j=0}\varphi_j(\hat{\mathcal{S}}_n)N_j=\varphi_0(\hat{\mathcal{S}}_n)(N_0+W_1)-\sum\limits^{k}_{j=1}\frac{1}{(j-1)!}W_j.
\end{equation}
This process requires the computation of $k$ algebra Sylvester equations and a Sylvester operator exponential acting on matrix.

In many practical applications the MRDEs have an low-rank structure and the solution has the low-rank property. In such cases it is necessary to avoid forming the matrices $X_n$ explicitly, because this in general leads to dense computations.
In the remainder of the section we briefly introduce how to implement the above mentioned two schemes in a low-rank fashion to the symmetric MRDEs. For simplicity let us consider the symmetric MRDEs (\ref{1.2}). 

Provided $Q,$ $G$ and $X_0$ in (\ref{1.2}) are symmetric positive semi-definite, and are given in the low-rank form
\begin{equation}\label{4.5b}
Q=C^TC, G=BB^T, \text{and}~X_0=L_0D_0L_0^T
\end{equation}
with $C\in \mathbb{R}^{l\times N},$ $B\in \mathbb{R}^{ N\times q},$ $L_0\in \mathbb{R}^{ N\times r},$ and $D_0\in \mathbb{R}^{ r\times r},$  $l,q,r\ll N.$ This implies that the solution $X(t)$ to the MRDEs (\ref{1.2}) is also symmetric positive semi-definite for all $t>0.$ First, we consider the exponential Rosenbrock-type Euler scheme (\ref{3.6}). Assume that the previous solution approximations $X_n$ admit a decomposition of the form $X_n = L_nD_nL_n^T$ with $L_n\in R^{N\times r_n}, D_n\in R^{r_n\times r_n}.$ $F_n$ in scheme (\ref{3.6}) can be written the form of $LDL^T:$
\begin{eqnarray}
\begin{array}{llll}\label{4.20}
F_n &=C^TC+AX_n+X_nA^T-X_nBB^TX_n\\
&=[C^T~ AL_n~ L_n~ L_n]\left(\begin{tabular}{cccccc}
$I$ & $$& $$& $$\\
$$ & $$& $D_n$& $$\\
$$ & $D_n$& $$& $$\\
$$ & $$& $$& $-(D_nL_n^TB)(D_nL_n^TB)^T$
\end{tabular}%
\right)[C^T~ AL_n~ L_n~ L_n]^T\\
&=[C^T~ AL_n~L_n]\left(\begin{tabular}{cccccc}
$I$ & $$& $$\\
$$ & $$& $D_n$\\
$$ & $D_n$& $-(D_nL_n^TB)(D_nL_n^TB)^T$\\
\end{tabular}%
\right)[C^T~ AL_n~ L_n]^T\\
&=\tilde{L}_n\tilde{D}_n\tilde{L}_n^T.\\
\end{array}
\end{eqnarray}
The new matrix $\tilde{L}_n$ has more columns than $L_n,$ and also more than their rank. As the number of columns in the decomposition increases, the computation cost will become prohibitively expensive. To overcome this difficulty we can incorporate the column compression strategy \cite{Lang} to $\tilde{L}_n,~\tilde{D}_n,$ and find more suitable low-rank factors.
Then, we apply the numerical quadrature formula (\ref{4.10}) to approximate $h_n\varphi_1(h_n\mathcal{S}_n)F_n,$ and the decomposition $Y_nT_nY_n^T$ is given by the factors
\begin{eqnarray}\label{4.19b}
Y_n=[e^{(1-c_0)h_nA_n}\tilde{L}_n,e^{(1-c_1)h_nA_n}\tilde{L}_n,\cdots,e^{(1-c_p)h_nA_n}\tilde{L}_n],
\end{eqnarray}
and
\begin{eqnarray}\label{4.19c}
T_n=diag(\gamma_0\tilde{D}_n,\gamma_1\tilde{D}_n,\cdots,\gamma_p\tilde{D}_p),~~\gamma_j=h_n\omega_j, j=1,\cdots,p.
\end{eqnarray}
Note that the evaluation of $Y_n$ requires computation of $p+1$ products between matrix exponential and a thin matrix. For large matrix $A_n,$ the block Krylov projection algorithm is a popular choice \cite{Saad}. An advantage of this computation is that one can project the $p+1$ operator exponential in the same search subspace $\mathcal{K}_{m}(A_n,\tilde{L}_n)$ and evaluate them simultaneously.

Now, using the splitting of $h_n\varphi_1(h_n\mathcal{S}_n)F_n$ and of the solution $X_n = L_nD_nL_n^T,$ the approximation $L_{n+1}D_{n+1}L_{n+1}^T$ to $X_{n+1}$ is given by
\begin{eqnarray}
\begin{array}{llll}
L_{n+1} =[L_n,Y_n],~~D_{n+1} =diag(D_n,T_n).
\end{array}
\end{eqnarray}
Again, we can employ column compression strategy to obtain low-rank splitting factors.

We now describe an alternative low-rank implementation of the exponential Rosenbrock-type Euler scheme. Apply the backward recursion (\ref{4.4})-(\ref{4.5a}) to the scheme (\ref{3.6}), giving
\begin{equation}\label{4.9}
\left\{
\begin{array}{l}
W_1=\mathcal{S}_n^{-1}F_n,\\
X(t_n+h_n)=\varphi_0(h_n\mathcal{S}_n)(W_1)+X_n-W_1, ~~n=0,1,\cdots.
\end{array}
\right.
\end{equation}
This results in solving one algebraic Lyapunov equation (ALE) which the right hand side has low-rank form in each time-step. There are many methods for solving Lyapunov equations where the right-hand side is of low rank, for instance by a low-rank ADI iteration \cite{Benner1} or Krylov subspace based methods \cite{Simoncini}. Due to the availability of low-rank ADI iteration based codes, here we limit ourselves to this procedure.

For order-third exponential integration scheme (\ref{3.7}), again, the previous solution approximation $X_n = L_nD_nL_n^T$ is assumed to be given in low-rank format. Note that the interval stage $X_{n,2}$ is the same as exponential Rosenbrock-type Euler scheme, thus $X_{n,2}$ can be written in the low-rank form $L_{n,2}D_{n,2}L_{n,2}^T.$ The external stage $X_{n+1}$ is a perturbation of the matrix $X_{n,2}$
by $h_n\varphi_3(h_n\mathcal{S}_n)(\mathcal{G}_n(X_{n,2})-\mathcal{G}_n(X_{n})).$ In order to find a low-rank factorization of the entire right hand side, we first consider the $LDL^T$-type splitting of $\mathcal{G}_n(X_{n,2})-\mathcal{G}_n(X_{n}).$ Direct calculation shows that
\begin{eqnarray}\label{4.21a}
\mathcal{G}_n(X_{n,2})-\mathcal{G}_n(X_{n})=X_nBB^TX_{n,2}+X_{n,2}BB^TX_n-X_{n,2}BB^TX_n-X_nBB^TX_n.
\end{eqnarray}
Inserting the splitting factors of $X_{n}$ and $X_{n,2}$ into (\ref{4.21a}) finally gives the splitting $\bar{L}_n\bar{D}_n\bar{L}_n^T$ with
\begin{eqnarray}\label{4.22}
\begin{array}{ll}
\bar{L}_n=[L_n~ L_{n,2}],~~~
\bar{D}_{n}=\left(\begin{tabular}{cccccc}
$-(D_nL_n^TB)(D_nL_n^TB)^T$ & $(D_nL_n^TB)(D_{n,2}L_{n,2}^TB)^T$\\
$(D_{n,2}L_{n,2}^TB)(D_nL_n^TB)^T$ & $-(D_{n,2}L_{n,2}^TB)(D_{n,2}L_{n,2}^TB)^T$\\
\end{tabular}%
\right).\\
\end{array}
\end{eqnarray}
Again, using (\ref{4.10}), the splitting factors $\bar{Y}_n, \bar{T}_n$ of $h_n\varphi_3(h_n\mathcal{S}_n)(\mathcal{G}_n(X_{n,2})-\mathcal{G}_n(X_{n}))$ can be computed as follows:
\begin{eqnarray}
\bar{Y}_n =[e^{(1-c_0)h_nA}\bar{L}_n,\cdots,e^{(1-c_p)h_nA}\bar{L}_n]
\end{eqnarray}
and
\begin{eqnarray}
\bar{T}_n =diag(\frac{\omega_0 c_0^{2}}{2}\bar{D}_n,\cdots,\frac{\omega_p c_p^{2}}{2}\bar{D}_n).
\end{eqnarray}
Now, using the $LDL^T$-type splitting with $X_{n,2}=L_{n,2}D_{n,2}L_{n,2}^T$ we obtain
\begin{eqnarray}
\begin{array}{lll}\label{3.9}
X_{n+1}\approx L_{n+1}D_{n+1}L_{n+1}^T\\
\end{array}
\end{eqnarray}
with
\begin{eqnarray}
\begin{array}{llll}
L_{n+1} =[L_{n,2},\bar{Y}_n]
\end{array}
\end{eqnarray}
and
\begin{eqnarray}
D_{n+1} =diag(D_{n,2},\bar{T}_n).
\end{eqnarray}

In actual implementation, once the new splitting factors are formed, column compression strategy should be performed to eliminate the redundant information.

\section{Numerical experiments}
In this section, we present some numerical experiments to illustrate the behaviour of exponential integration methods. We compare the numerical performance of the exponential Rosenbrock-type Euler scheme (\ref{3.6}) (denoted \textsf{ExpEuler}) and the third order exponential integration scheme \textsf{Erow3} (\ref{3.7}) with the splitting schemes in \cite{Stillfjord2}.
 For \textsf{ExpEuler}, we consider all the above mentioned three different implementations. They are marked as follows: the general implementation (\ref{4.8}) (denoted by \textsf{GExpEuler}), the low-rank implementation (denoted by \textsf{LrExpEuler}) and the backward recursion implementation (\ref{4.9}) (denoted by \textsf{BrExpEuler}). For low-rank implementations, the tolerance for column compression strategies are set to $n\cdot \epsilon,$ where $n$ is the system dimension and $\epsilon$ denotes the machine precision.
All experiments are performed under Windows 10 and MATLAB R2018b running on a laptop with an Intel Core i7 processor with 1.8 GHz and RAM 8 GB. Unless otherwise stated, we use the relative errors at the final time, measured in the Frobenius norm.

\noindent\textbf{Experiment 1.} As the first test, we consider the matrix $A\in \mathbb{R}^{n\times n}$ obtained from the standard 5-point difference discretization of the two-dimensional PDE
\begin{equation}\label{5.1}
\Delta u - f_1(x,y) \frac{\partial u}{\partial x} - f_2(x,y)\frac{\partial u}{\partial y}-f_3(x,y)u =0
\end{equation}
on the domain $\Omega=[0,1]^2$ with homogeneous Dirichlet boundary conditions, and $B,C\in \mathbb{R}^{n\times 2}$ with entries chosen randomly from [0; 1]. The matrix $A$ is a negative stiffness matrix which can be generated by MATLAB function \textsf{fdm2dmatrix} from LYAPACK toolbox \cite{Penzl}. We consider the discretization of (\ref{5.1}) for two different values of functions $f_1,f_2,f_3,$ namely $f_1=f_2=f_3=0$ and $f_1=10x,$ $f_2=100y,$ $f_3=0,$ respectively. The former generate a symmetric matrix (denoted \textsf{fdm-sym}), while the latter is unsymmetric (denoted \textsf{fdm-nonsym}).
The corresponding initial values $X_0$ are choosen as a low-rank product $X_0=L_0L_0',$ where $L_0\in \mathbb{R}^{n\times 2}$ are randomly generated.
To ensure the availability of a reference solution, we perform two sizes on the time interval [0, 1], one with $n = 64$ and the other with $n = 100.$ The reference solutions are obtained by MATLAB built-in function \textsf{ode15s} with an absolute tolerance of $10^{-20}$ and a relative tolerance of $2.22045\cdot10^{-14}.$ This has been done by vectorizing the MRDEs into a vector-valued ODEs with $n^2$ unknowns. We use the above mentioned methods to integrate the four systems over the time interval [0, 1] with time step size $h=0.01.$

Table \ref{tab1} lists the relative errors at the final time $t=1$ as well as the total time (in seconds) of the methods to integrate these systems. The results show that the two exponential integration schemes achieve the high precision of about $10^{-14}$ in all cases and obtained a higher order of convergence than we expected. An interpretation of this as the exponential integrators could be suitable for the structure of the MRDEs and capture some qualitative properties. As a comparison, we also present the results for the additive symmetric scheme of order 4 (denoted \textsf{Additive4}) in \cite{Stillfjord2} with the same timestep and \textsf{ode15s} with the same accuracy. The code for the \textsf{Additive4} contains parallel loops which uses 4 workers on our machine. We can see the exponential integration schemes accomplish higher computational accuracy than \textsf{Additive4} and take less runtimes than \textsf{ode15s}.

Figures \ref{figure1},~\ref{figure2} show plots of the F-norm of solutions obtained by \textsf{ExpEuler} and the reference solutions provided by \textsf{ode15s} for each test system. The \textsf{Erow3} yields very similar behaviors with \textsf{ExpEuler} and is omitted here. From these figures we see that the \textsf{ExpEuler} follow well behaviours of the reference solutions. Although it is not very accurate in the start some time steps, the behaviours are completely corrected as the time increase. At the final time, the relative error even level out around $10^{-14}.$ This is also true in the subsequent experiments and we interpret this as exponential integration schemes being favorable for MRDEs.
\begin{table}[h]
\setlength{\abovecaptionskip}{0.cm}
\setlength{\belowcaptionskip}{-0.3cm}
\caption{CPU time in seconds and relative errors for each of the methods over the time interval [0, 1]}
 \label{tab1}
\begin{center}
{\scriptsize
\begin{tabular*}{\textwidth}{@{\extracolsep{\fill}}@{~}c|c|lr|lr|lr|lr|lr|lr}
\toprule%
\raisebox{-2.00ex}[0cm][0cm]{$matrix$}&
\raisebox{-2.00ex}[0cm][0cm]{size(A)}
 &\multicolumn{2}{c|}{\textsf{GExpEuler}}&\multicolumn{2}{c|}{\textsf{LrExpEuler}}&\multicolumn{2}{c|}{\textsf{BrExpEuler}}&\multicolumn{2}{c|}{\textsf{Erow3}}
 &\multicolumn{2}{c|}{\textsf{Additive4}}&\multicolumn{2}{c}{\textsf{ode15s}} \cr
\cmidrule(lr){3-14}
&&Error&Time&Error&Time &Error&Time &Error&Time &Error&Time&Error&Time\\
\midrule%
\multirow{2}{*}{fdm-sym}&64$\times$64&1.22e-14&0.95&1.31e-14&3.36&4.58e-14&1.56&1.30e-14&4.25&2.09e-04&2.67&1.78e-14&334.81\\
&100$\times$100&1.57e-14&1.81&1.73e-14&6.28&4.46e-13&1.82&1.77e-14&7.51&8.53e-04&3.07& 2.29e-14&3708.20\\
\cmidrule(lr){3-14}
\multirow{2}{*}{fdm-nonsym}&64$\times$64&2.01e-14&0.96&2.16e-14&2.12&8.61e-14&2.12&2.15e-14&3.11&5.17e-04&4.02&2.89e-14&423.85\\
  &100$\times$100&2.26e-14&2.16&2.78e-14&12.67&3.21e-14&2.58&2.79e-14&15.37&1.89e-03&5.73&3.19e-14&4773.10\\
\bottomrule
\end{tabular*}
}
\end{center}
\end{table}

\begin{figure}
\begin{minipage}[t]{0.5\linewidth}
\centering
\includegraphics[width=7cm,height=4.5cm]{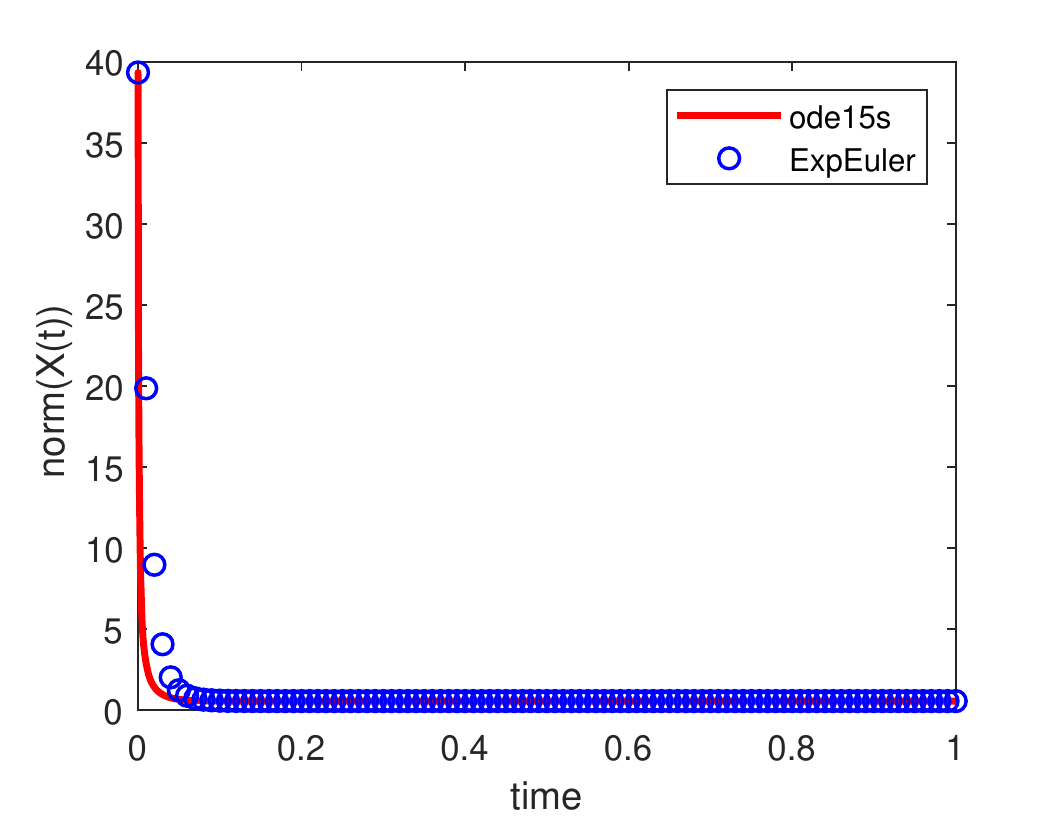}\\
\footnotesize{a. ~~\textsf{fdm-sym}, $n=64$}
\label{a}
\end{minipage}
\mbox{\hspace{-1.5cm}}
\begin{minipage}[t]{0.5\linewidth}
\centering
\includegraphics[width=7cm,height=4.5cm]{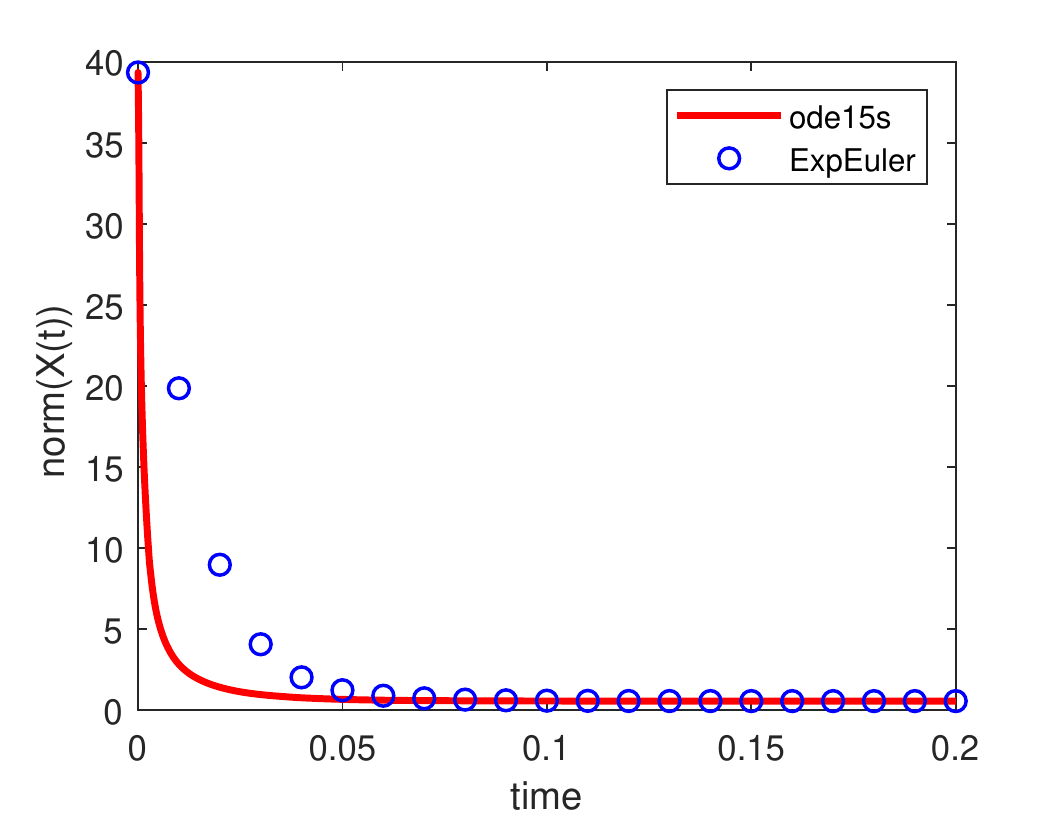}\\
\footnotesize{b.~~Zoomed image of a}
\label{EARRK2B }
\end{minipage}\\
\begin{minipage}[t]{0.5\linewidth}
\centering
\includegraphics[width=7cm,height=4.5cm]{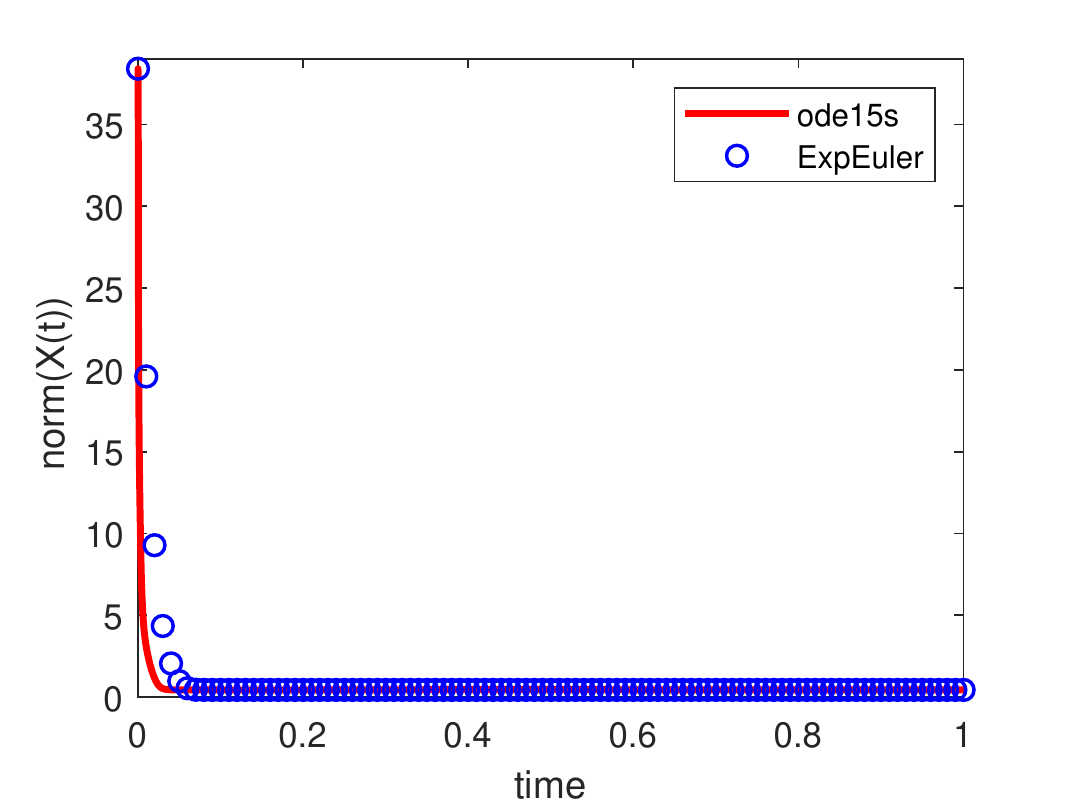}\\
\footnotesize{c.~~\textsf{fdm-nonsym}, $n=64$}
\label{EARRK2C}
\end{minipage}
\mbox{\hspace{-1.5cm}}
\begin{minipage}[t]{0.5\linewidth}
\centering
\includegraphics[width=7cm,height=4.5cm]{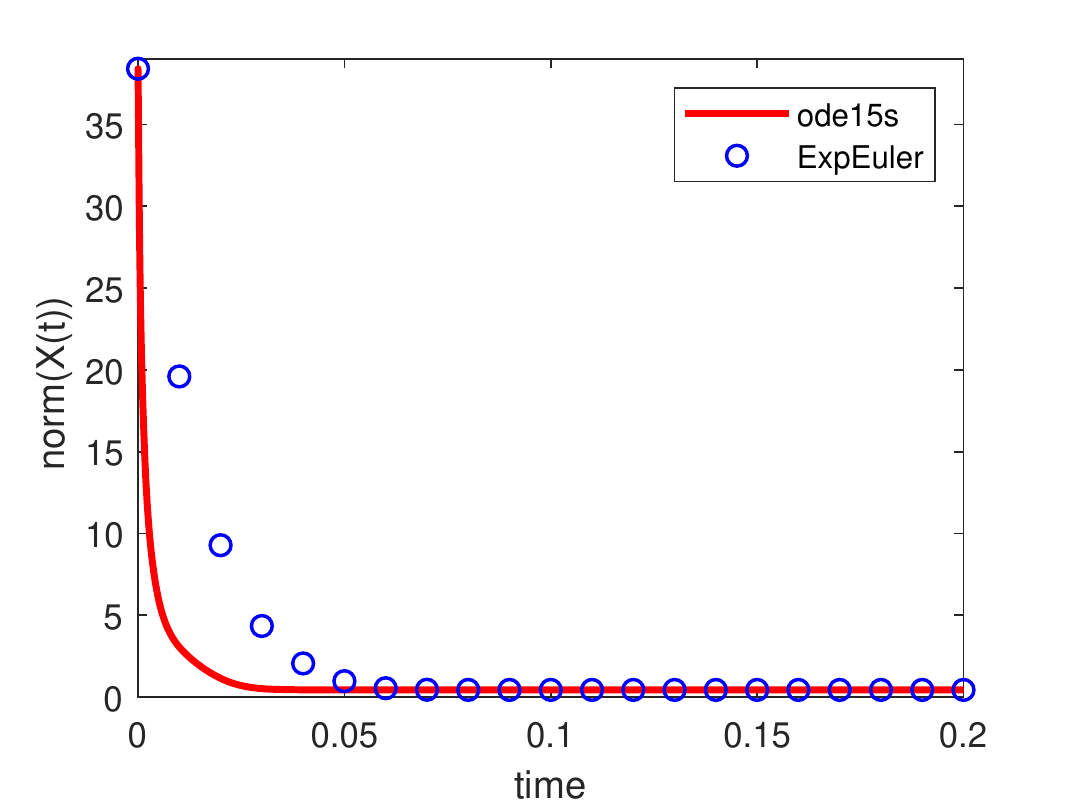}\\
\footnotesize{d.~~Zoomed image of c}
\label{EARRK2D}
\end{minipage}
\caption{ \footnotesize{The F-norm of the solutions using \textsf{ExpEuler} (o) and \textsf{ode15s} (-) for \textsf{fdm-sym} and \textsf{fdm-nonsym} of size $64\times64$ on [0,1], respectively.}}\label{figure1}
\end{figure}

\begin{figure}
\begin{minipage}[t]{0.5\linewidth}
\centering
\includegraphics[width=7cm,height=5cm]{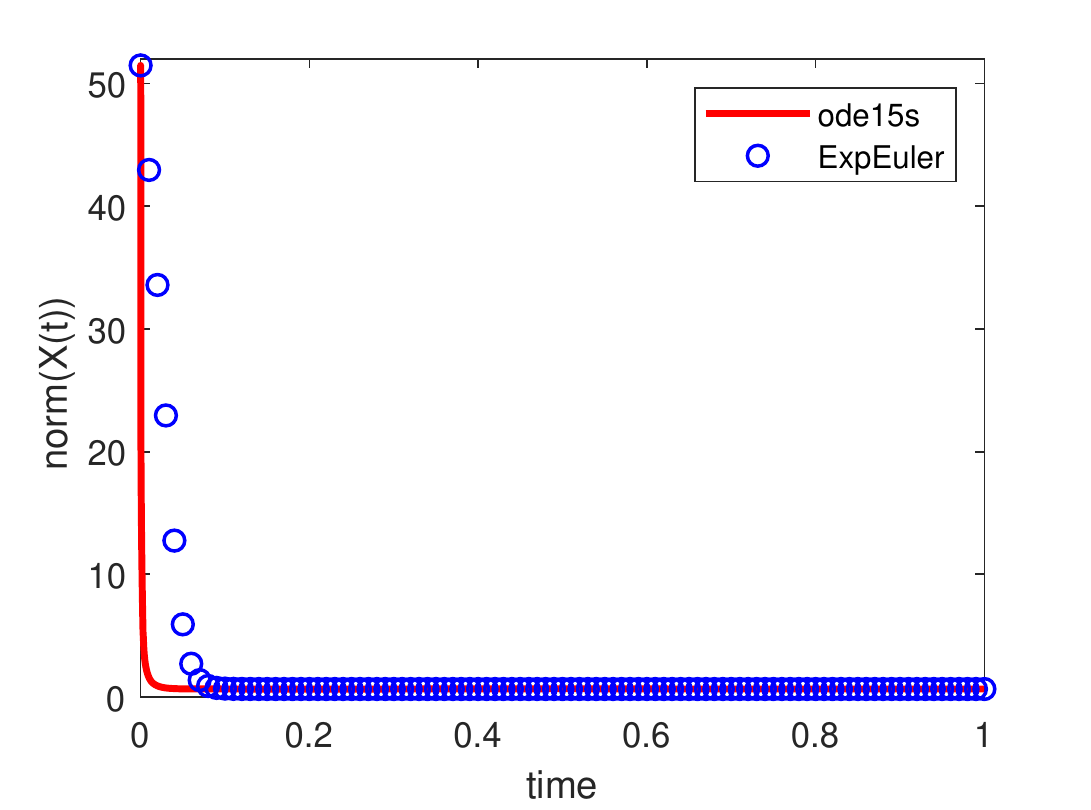}\\
\footnotesize{a. ~~\textsf{fdm-sym}, $n=100$}
\end{minipage}
\mbox{\hspace{-1.5cm}}
\begin{minipage}[t]{0.5\linewidth}
\centering
\includegraphics[width=7cm,height=5cm]{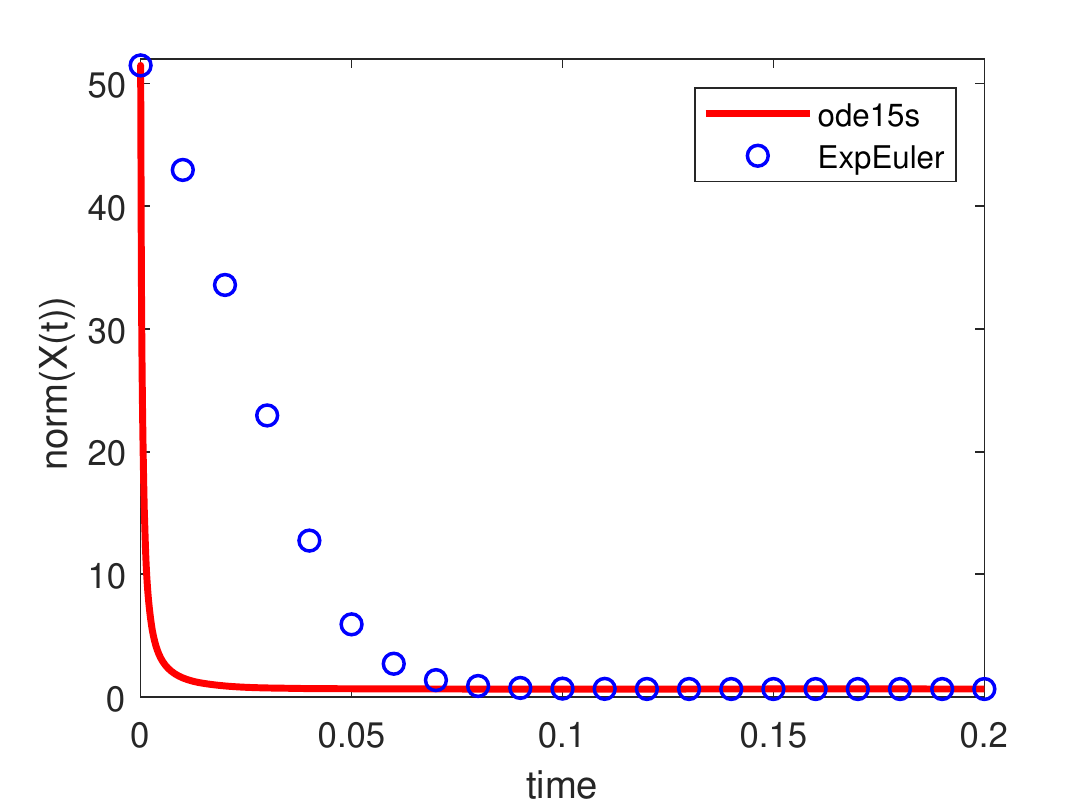}\\
\footnotesize{b.~~Zoomed image of a}
\end{minipage}\\
\begin{minipage}[t]{0.5\linewidth}
\centering
\includegraphics[width=7cm,height=5cm]{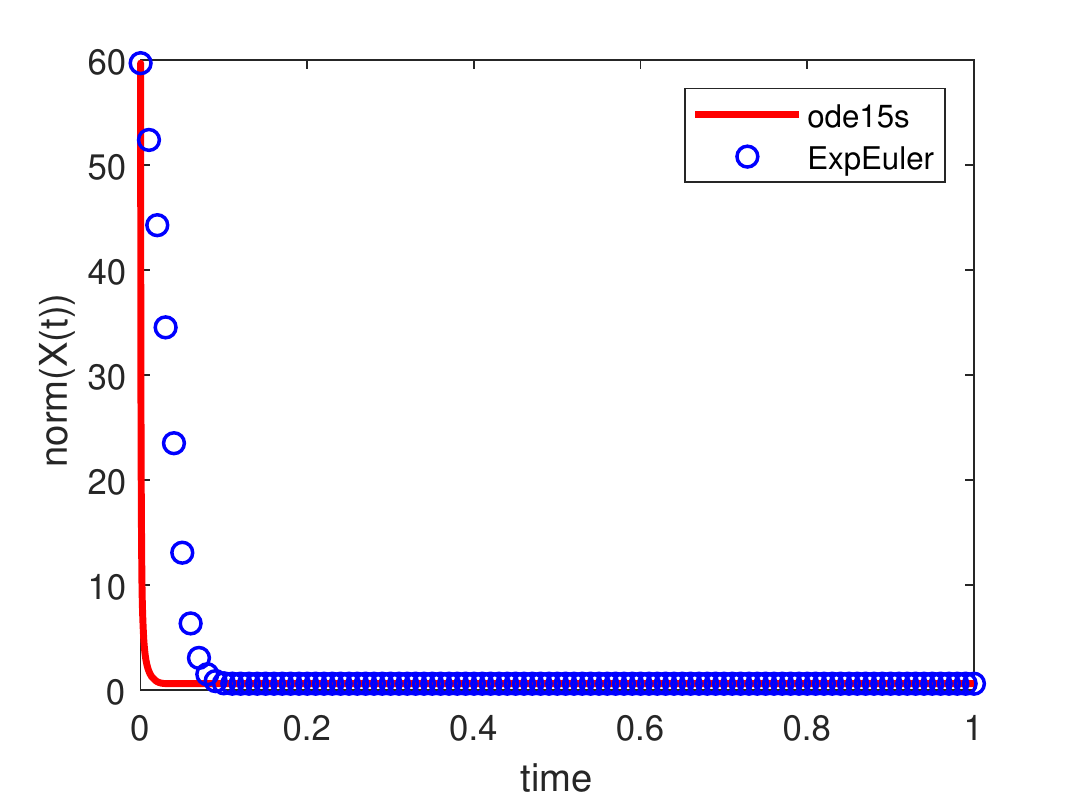}\\
\footnotesize{c.~~\textsf{fdm-nonsym}, $n=100$}
\end{minipage}
\mbox{\hspace{-1.5cm}}
\begin{minipage}[t]{0.5\linewidth}
\centering
\includegraphics[width=7cm,height=5cm]{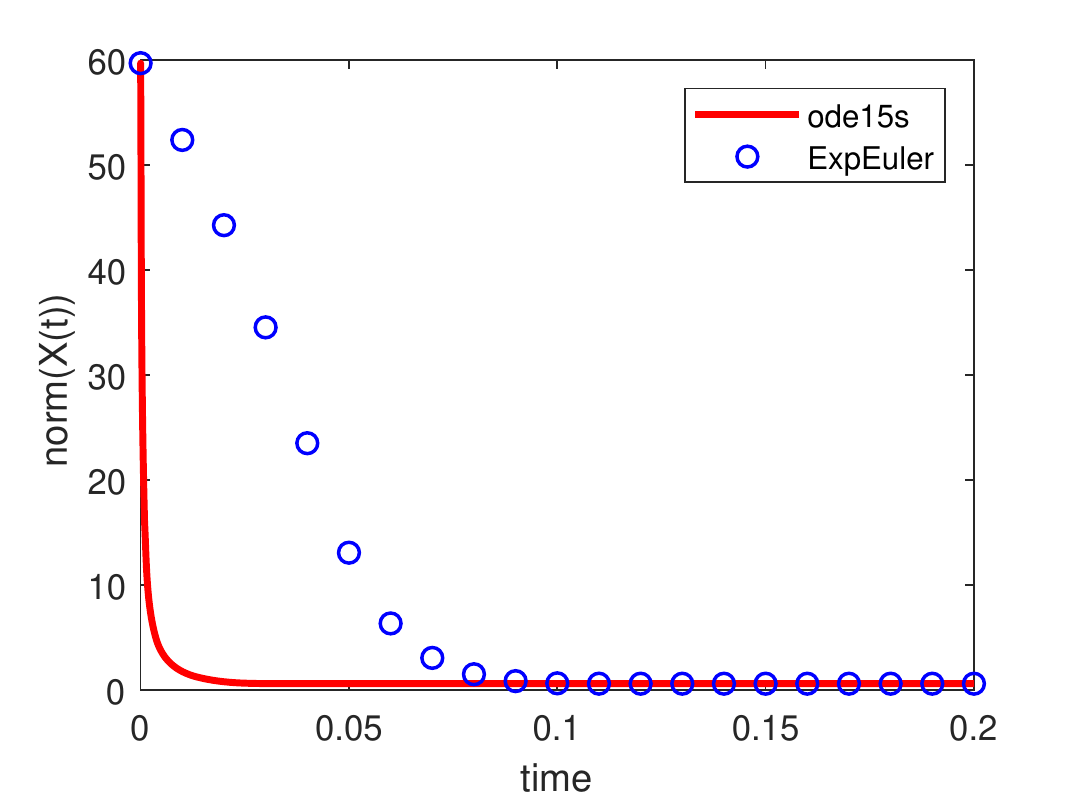}\\
\footnotesize{d.~~Zoomed image of c}
\end{minipage}
\caption{ \footnotesize{The F-norm of the solutions using \textsf{ExpEuler} (o) and \textsf{ode15s} (-) for \textsf{fdm-sym} and \textsf{fdm-nonsym} of size $100\times100$ on [0,1], respectively.}}\label{figure2}
\end{figure}

\noindent\textbf{Experiment 2.} As a second test, we consider a finite element discretization of a heat
equation arising from the optimal control of steel cooling \cite{Benner}. The matrix $A\in\mathbb{ R}^{n\times n}$ is symmetric and stable, $B\in \mathbb{R}^{n\times 7}$ and $C\in \mathbb{R}^{ 6\times n }.$ The initial value $X_0=L_0L_0'$ is set as $L_0=0_{n\times1}$ or random vectors $L_0=\textsf{rand}(n,2),$ with elements from the normal (0,1) distribution. To ensure the availability of a reference solution, we perform two sizes on the time interval [0,45] with $h=4.5$, one with $n = 371$ and the other with $n = 1357.$ The reference solutions are computed by MATLAB built-in function \textsf{ode45} with an absolute tolerance of $10^{-20}$ and a relative tolerance of $2.22045\cdot10^{-14}.$
Similar the above experiment Figures \ref{figure3} and \ref{figure4} show the convergence behaviour of the solutions of the \textsf{ExpEuler} to the reference solution
for the size $n = 371$ and $n=1375.$ From these Figures, we observe that the \textsf{ExpEuler} gives a fairly good
approximations of the reference solutions. We also present the numerical results of the different methods mentioned above in Table \ref{tab2}. We see that the proposed method performs quite well in terms of accomplished accuracy and computational time.

\begin{table}[h]
\setlength{\abovecaptionskip}{0.cm}
\setlength{\belowcaptionskip}{-0.3cm}
\caption{CPU time in seconds and relative errors for each of the methods over the time interval [0, 45] with $h=4.5$}
 \label{tab2}
\begin{center}
{\scriptsize
\begin{tabular*}{\textwidth}{@{\extracolsep{\fill}}@{~}c|c|lr|lr|lr|lr|lr}
\toprule%
\raisebox{-2.00ex}[0cm][0cm]{$matrix$}&\raisebox{-2.00ex}[0cm][0cm]{$L_0$} & \multicolumn{2}{c|}{\textsf{GExpEuler}}&\multicolumn{2}{c|}{\textsf{LrExpEuler}}&\multicolumn{2}{c|}{\textsf{BrExpEuler}}&\multicolumn{2}{c|}{\textsf{Erow3}}&
\multicolumn{2}{c}{\textsf{Additive4}} \\
\cmidrule(lr){3-12}
&&Error&Time &Error&Time &Error&Time&Error&Time &Error&Time\\
\midrule%
\multirow{2}{*}{rail371}
   &$0_{371\times1}$&8.92e-14&0.75&7.19e-12&0.11&7.07e-11&1.10&7.19e-12&0.16&3.38e-11&0.33\\
    &rand({371,2})&8.61e-14&0.99&4.23e-12&0.13&7.25e-11&1.18&4.22e-12&0.19&3.26e-11&0.42\\
\cmidrule(lr){3-12}
\multirow{2}{*}{rail1357}
   &$0_{1357\times1}$&2.23e-14&21.29&7.66e-12&0.56&2.48e-10&14.55&7.66e-12&0.80&3.60e-11&0.52\\
  &rand({1357,2}) &1.59e-14&29.54&4.18e-12&0.67&1.78e-10&16.11&4.27e-12&0.97&2.57e-11&0.56\\
\bottomrule
\end{tabular*}
}
\end{center}
\end{table}

\begin{figure}
\begin{minipage}[t]{0.5\linewidth}
\centering
\includegraphics[width=7cm,height=5cm]{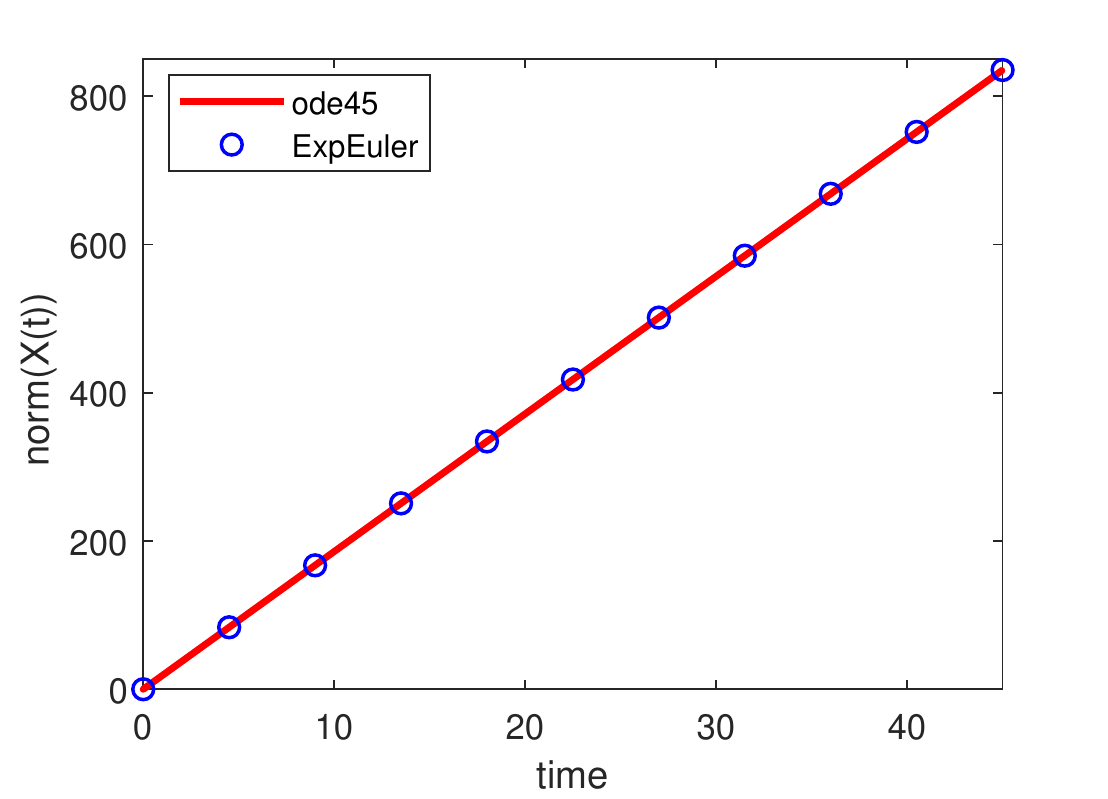}\\
\footnotesize{a. ~~$L_0=0_{371\times1}$}
\end{minipage}
\mbox{\hspace{-1.5cm}}
\begin{minipage}[t]{0.5\linewidth}
\centering
\includegraphics[width=6cm,height=5cm]{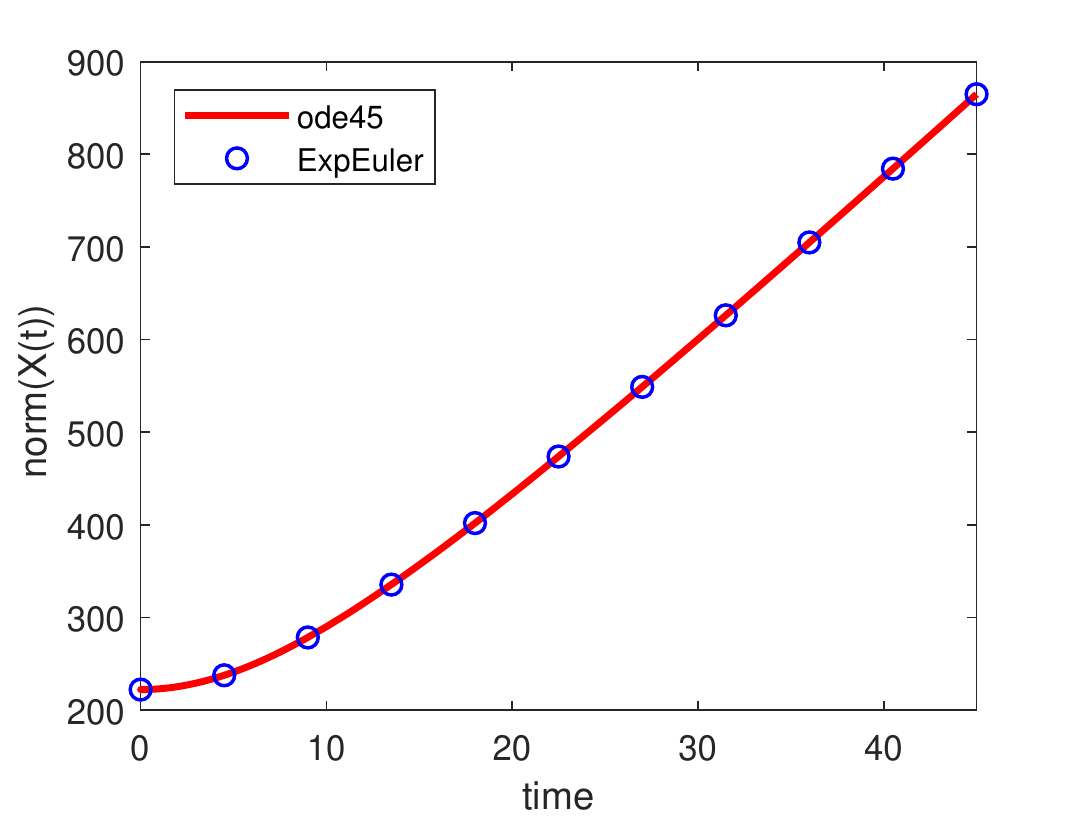}\\
\footnotesize{b.~~$L_0=rand_{371\times2}$}
\end{minipage}\\
\caption{\footnotesize{The F-norm of the solutions using \textsf{ExpEuler} (o) and \textsf{ode45} (-) for for $rail371$ with $X_0=L_0L_0'$ on [0,45] with $h=4.5.$}}\label{figure3}
\end{figure}

\begin{figure}
\begin{minipage}[t]{0.5\linewidth}
\centering
\includegraphics[width=7cm,height=5cm]{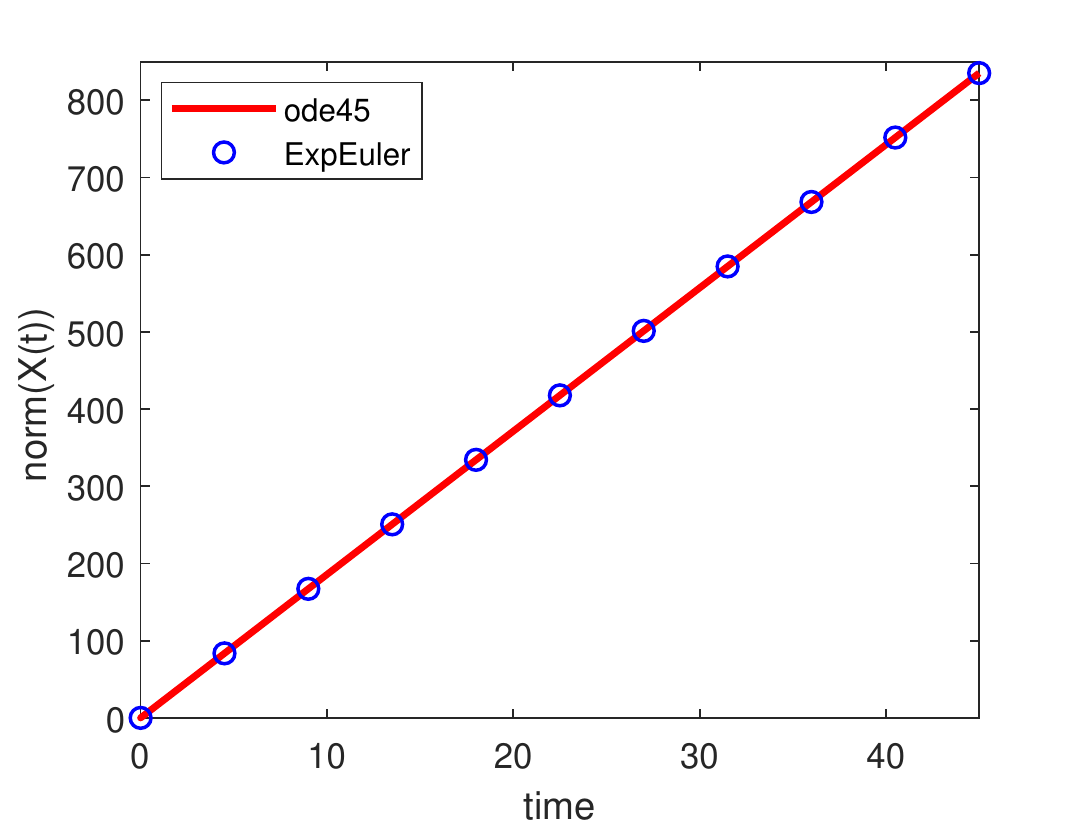}\\
\footnotesize{a. ~~$L_0=0_{1357\times1}$}
\end{minipage}
\mbox{\hspace{-1.5cm}}
\begin{minipage}[t]{0.5\linewidth}
\centering
\includegraphics[width=6cm,height=5cm]{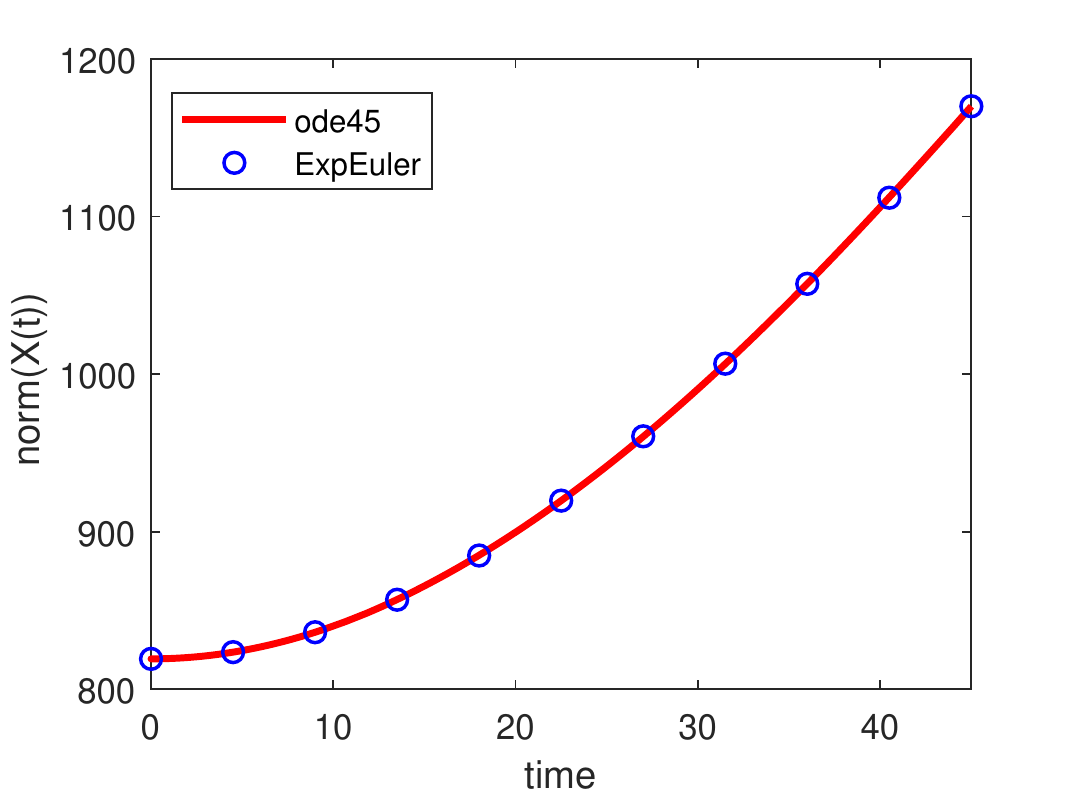}\\
\footnotesize{b.~~$L_0=rand_{1357\times2}$}
\end{minipage}\\
\caption{\footnotesize{The F-norm of the solutions using \textsf{LrExpEuler} (o) and \textsf{ode45} (-) for for $rail1357$ with $X_0=L_0L_0'$ on [0,45] with $h=4.5.$}}\label{figure4}
\end{figure}

\noindent\textbf{Experiment 3.} The third experiment is the same problem as in Experiment 1 for larger-scale dimensions. We choose the same setting as in Experiment 1. We compare the low-rank approximations, \textsf{LrExpEuler}, \textsf{Erow3}, \textsf{BrExpEuler} to the symmetric splitting of orders 4 and 6 for systems of dimensions $n=400,900,1600,2500.$ For larger-scale stiff matrix, our tests indicate that the numerical integration formula (\ref{4.10}) will cause low computational accuracy. In following tests, we use a relatively smaller sizes $h=0.001$ except for the \textsf{BrExpEuler} which is set as $h=0.01.$
Due to the systems sizes, it is infeasible to use \textsf{ode15s} to compute an accurate reference solutions. Instead, we use the eighth-order symmetric splitting scheme in \cite{Stillfjord2} with the time step $h=0.001.$ Table \ref{tab3} presents the relative errors and the corresponding computation times for each of the systems with different values of the system size $n.$
It is noted that the \textsf{BrExpEuler} produces the smallest errors of all methods even though the time step is larger. This is due to the poor performance  of numerical integrator to $\varphi$-function. This also shows that the exponential integrators have large computational potential and inspires us to exploit other efficiently numerical approximations to $\varphi$-function. We plan to investigate this option in our future work.
In terms of the CPU times,
we observe that in some cases the exponential integration methods spend more CPU times than the symmetric splitting schemes. This might have a weakened effect when the computation is done on a single core machine as the symmetric splitting methods employ parallel loops with four cores in our laptop. Again, the \textsf{LrExpEuler} obtained almost the same accuracy as \textsf{Erow3}, which illustrates the feasibility of adaptivity. We observe that the accuracy of the resulting solutions reduce as the size of system and its stiffness increase for all methods. We attribute this mainly to the pessimistic reference solutions.
\begin{table}[h]
\setlength{\abovecaptionskip}{0.cm}
\setlength{\belowcaptionskip}{-0.3cm}
\caption{The CPU and the relative errors with respect to the reference solution for $N=1000.$}
 \label{tab3}
\begin{center}
{\scriptsize
\begin{tabular*}{\textwidth}{@{\extracolsep{\fill}}@{~~}c|c|lr|lr|lr|lr|lr}
\toprule%
\raisebox{-2.00ex}[0cm][0cm]{Matrix} &\raisebox{-2.00ex}[0cm][0cm]{size(A)}&\multicolumn{2}{c|}{\textsf{LrExpEuler}}&\multicolumn{2}{c|}{\textsf{BrExpEuler}}&\multicolumn{2}{c|}{\textsf{Erow3}}&\multicolumn{2}{c|}{\textsf{Additive4}}&\multicolumn{2}{c}{\textsf{Additive6}} \\
\cmidrule(lr){3-12}
&&Error&Time &Error&Time &Error&Time &Error&Time&Error&Time\\
\midrule%
\multirow{4}{*}{fdm-sym} &400$\times$400&8.21e-07&16.64&1.46e-08&10.79&8.21e-07&41.31& 3.68e-05&64.69&8.03e-07&86.99\\
   &900$\times$900&7.67e-05&110.77&3.06e-06&77.50&7.67e-05&191.85&5.39e-04&148.54&4.18e-05&189.66\\
   &1600$\times$1600& 7.84e-04&500.91&6.21e-05&470.55&7.84e-04&711.84&2.53e-03&293.53&3.42e-04&348.13\\
   &2500$\times$2500&3.14e-03&1641.29&3.70e-04&1953.69&3.14e-03&2067.08&7.51e-03&541.69&1.15e-03&957.70\\
   \cmidrule(lr){3-12}
\multirow{4}{*}{fdm-nonsym} &400$\times$400& 1.18e-06&28.03&1.96e-08&19.37&1.18e-06&87.31&4.40e-05&116.79&1.03e-06&170.70\\
   &900$\times$900&7.80e-05&112.67&3.37e-06&116.65&7.80e-05&263.88&5.64e-04&392.62&4.51e-05&491.76\\
   &1600$\times$1600&8.16e-04&558.12&6.52e-05&702.36&8.16e-04&925.01&2.56e-03&917.33&3.58e-04&990.88\\
   &2500$\times$2500&3.22e-03&2013.00&3.92e-04&2822.60&3.22e-03&2629.20&8.00e-03&1671.76&1.22e-03&2120.35\\
\bottomrule
\end{tabular*}
}
\end{center}
\end{table}

\section{Conclusion}
In this paper, we show how to apply exponential integrators to get approximate solutions of large stiff MRDEs. The low-rank implementation of such schemes for large-scale applications and their comparison with current state-of-the-art integrators must be addressed. Numerical experiments illustrate that the exponential integration methods can achieve convergence than the expected order. Thus the exponential integrators can provide an efficient alternative to standard integrators for large-scale stiff problems. The study of the performance and application of the higher-order exponential integration schemes and their comparative performance with implicit schemes will be presented elsewhere. We also plan to develop new more efficient algorithms to approximate the exponential and related functions of a Sylvester operator acting on an matrix.


\begin{thebibliography}{99}
\bibitem{Abou} H. Abou-Kandil, G. Freiling, V. Ionescu and G. Jank, \emph{Matrix Riccati Equations in Control and Systems Theory,} Birkh\"{a}user, Basel, Switzerland, 2003.

\bibitem{Al-Mohy 2009}A. Al-Mohy and N. Higham,  \emph{A new scaling and modified squaring algorithm for matrix functions,} SIAM J. Matrix Anal. Appl., 31 (2009), pp. 970-989.

\bibitem{Al-Mohy 2011}A. Al-Mohy and N. Higham, \emph{Computing the action of the matrix exponential, with an application to exponential integrators,} SIAM J. Sci. Comput., 33 (2011), pp. 488-511.

\bibitem{Ascher} U.M. Ascher, R.M. Mattheij and R. G. Russell, \emph{Numerical solution of boundary value problems for ordinary differential equations,} Prentice-Hall, Englewood Cliffs, NJ, 1988.

\bibitem{Behr} M. Behr, P. Benner and J. Heiland, \emph{Solution Formulas for Differential Sylvester and
Lyapunov Equations,} arXiv, 2018, \url{https://arxiv.org/abs/1811.08327}.


\bibitem{Benner1}P. Benner, M. K\"{o}hler, and J. Saak, \emph{M.E.S.S.-the matrix equations sparse solvers library,}
\url{https://www.mpi-magdeburg.mpg.de/projects/mess}.

\bibitem{Benner01}P. Benner and H. Mena, \emph{Rosenbrock methods for solving Riccati differential equations,} IEEE T. Automat. Contr., 58 (2013), pp. 2950-2956.

\bibitem{Benner}P. Benner and J. Saak. \emph{A semi-discretized heat transfer model for optimal cooling of steel
profiles,} In Dimension Reduction of Large-Scale Systems, volume 45 of Lect. Notes Comput. Sci. Eng.), P. Benner, V. Mehrmann, and D. Sorensen, Eds.Berlin/Heidelberg, Germany: Springer-Verlag, 2005, pp. 353-356.

\bibitem{Butcher} J.C. Butcher, \emph{Numerical Methods for Ordinary Differential Equations,}  John Wiley and Sons Ltd, Chichester, 2008.

\bibitem{Choi}C.H. Choi and A. J. Laub, \emph{Efficient matrix-valued algorithms for solving stiff Riccati differential equations,}  IEEE T. Automat. Contr., 35 (1990), pp. 770-776.

\bibitem{Dieci}L. Dieci, \emph{Numerical integration of the differential Riccati equation and some
related issues,} SIAM J. Numer. Anal., 29 (1992), pp. 781-815.

\bibitem{Fital} S. Fital, C.H. Guo, \emph{Convergence of the solution of a nonsymmetric matrix Riccati differential equation to its stable
equilibrium solution,} J. Math. Anal. Appl., 318 (2006), pp. 648-657.

\bibitem{Jbilou} Y. G\"{u}ldo\v{g}an, M. Hached, K. Jbilou, M. Kurulay, \emph{Low rank approximate solutions to large-scale
differential matrix Riccati equations,} arXiv, 2017, \url{https://arxiv.org/abs/1612.00499}.

 \bibitem{Hached} M. Hached and K. Jbilou, \emph{Approximate solution to large nonsymmetric differential Riccati problems,} arXiv, 2018,  \url{https://arxiv.org/abs/1801.01291v1}.

\bibitem{Higham 2008} N.J. Higham, \emph{Functions of matrices: theory and computation,} SIAM, Philadelphia, 2008.

\bibitem{MH2} M. Hochbruck and A. Ostermann, \emph{Explicit Exponential Runge-Kutta Methods for Semilinear Parabolic Problems,} SIAM J. Numer. Anal., 43 (2006), pp. 1069-1090.

\bibitem{MH4} M. Hochbruck and A. Ostermann, \emph{Exponential multistep methods of Adams-type,} BIT Numer. Math., 51 (2011), pp. 889-908.

\bibitem{Hochbruck2010} M. Hochbruck,and A. Ostermann, \emph{Exponential Integrators,} Acta Numer., 19 (2010), pp. 209-286.

\bibitem{MH3} M. Hochbruck, A. Ostermann and J. Schweitzer,  \emph{Exponential Rosenbrock-Type Methods,} SIAM J. Numer. Anal., 47 (2009), pp. 786-803.

\bibitem{Ichikawa}A. Ichikawa and H. Katayama, \emph{Remarks on the time-varying $H_\infty$ Riccati equations,} Systems Control Lett., 37 (1999), pp. 335-345.

\bibitem{Moler 2003} C. Moler, C.V. Loan, \emph{Nineteen dubious ways to compute the exponential of a matrix, twenty-five years
later,} SIAM Review, 45 (2003), pp. 3-49.

\bibitem{Jacobs} O.L.R. Jacobs, \emph{Introduction to Control Theory, Oxford Science Publications,} Oxford, UK, 2nd ed., 1993.

\bibitem{Juang} J. Juang, \emph{Global existence and stability of solutions of matrix Riccati equations,} J. Math. Anal. Appl., 258 (2001), pp. 1-12.

\bibitem{AK} A.K. Kassam and L.N. Trefethen, \emph{Fourth-order time stepping for stiff PDEs,} SIAM J. Sci. Comput., 26 (2005), pp. 1214-1233.

\bibitem{Koskela}A. Koskela and H. Mena, \emph{A structure preserving Krylov subspace method for large scale differential Riccati equations,} arXiv, 2017, \url{https://arxiv.org/abs/1705.07507v1}.

\bibitem{Kucera} V. Ku\v{c}era, \emph{A review of the matrix Riccati equation,} Kybernetika, 9 (1973), pp. 42-61.

\bibitem{Lang} N. Lang, H. Mena and H. Saak, \emph{On the benefits of the $LDL^T$ factorization for lorge-scale differential matrix equation solvers,} Linear Algebra Appl., 480 (2015), pp. 44-71.

\bibitem{Luan2013} V.T. Luan and A. Ostermann, \emph{Exponential B-series: the stiff case,} SIAM J. Numer. Anal., 51 (2013), pp. 3431-3445.

\bibitem{Mena} H. Mena, A. Ostermann, L.M. Pfurtscheller, C. Piazzola,  \emph{Numerical low-rank approximation of matrix
differential equations,} J. Comput. Appl. Math., 340 (2018), pp. 602-614.

\bibitem{BV2005}B.V. Minchev and W.M. Wright, \emph{A review of exponential integrators for first order semi-linear problems,} Tech. report 2/05, Department of Mathematics, NTNU, 2005.

\bibitem{Penzl} T. Penzl, \emph{LYAPACK: A MATLAB Toolbox for Large Lyapunov and Riccati Equations, Model Reduction Problems, and Linear-Quadratic Optimal Control Problems, Users' Guide (Version 1.0)}, 1999.

\bibitem{Reid1972} W.T. Reid, \emph{Riccati Differential Equations,} Academic Press, New York, 1972.

\bibitem{Saad} Y. Saad, \emph{Iterative methods for sparse linear systems,} SIAM, Philadelphia, 1972.

\bibitem{Sidje 1998}R.B. Sidje, \emph{Expokit: A software package for computing matrix exponentials,} ACM Trans. Math. Softw., 24 (1998), pp. 130-156.

\bibitem{Stillfjord1} T. Stillfjord, \emph{Low-rank second-order splitting of large-scale differential Riccati equations,} IEEE
T. Automat. Contr., 60 (2015), pp. 2791-2796.

\bibitem{Stillfjord2}T. Stillfjord, \emph{Adaptive high-order splitting schemes for large-scale
differential Riccati equations,} Numer. Algor., 78 (2018), pp. 1129-1151.

\bibitem{Stillfjord3}T. Stillfjord, \emph{Singular value decay of operator-valued differential Lyapunov and Riccati equations,} SIAM J. Control Optim., 56 (2018), pp. 3598-3618.

\bibitem{Simoncini}V. Simoncini, \emph{A new iterative method for solving large-scale Lyapunov matrix equations,}
SIAM J. Sci. Comput., 29 (2007), pp. 1268-1288.

\end{thebibliography}
\end{document}